\theoremstyle{plain}
\numberwithin{equation}{section}
\newtheorem{thm}{Theorem}[section]
\newtheorem{theorem}[thm]{Theorem}
\newtheorem{definition}[thm]{Definition}
\newtheorem{proposition}[thm]{Proposition}
\newtheorem{corollary}[thm]{Corollary}
\newtheorem{remark}[thm]{Remark}
\begin{document}

\setcounter{page}{1}
\title[Solving the Pell equation via Rédei rational functions]{Solving the Pell equation via Rédei rational functions}
\author{Stefano Barbero}
\address{Dipartimento di Matematica\\
                Universit\`{a} di Torino\\
                Via Carlo Alberto 8\\
                Torino, Italy}
\email{stefano.barbero@unito.it}
\author{Umberto Cerruti}
\address{Dipartimento di Matematica\\
                Universit\`{a} di Torino\\
                Via Carlo Alberto 8\\
                Torino, Italy}
\email{umberto.cerruti@unito.it}
 \author{Nadir Murru}
\address{Dipartimento di Matematica\\
                Universit\`{a} di Torino\\
                Via Carlo Alberto 8\\
                Torino, Italy}
\email{nadir.murru@unito.it}

\begin{abstract}
In this paper, we define a new product over $\mathbb{R}^{\infty}$, which allows us to obtain a group isomorphic to $\mathbb R^*$ with the usual product. This operation unexpectedly offers an interpretation  of the Rédei rational functions, making more clear some of their properties, and leads to another product, which generates a  group structure over the Pell hyperbola. Finally, we join together these results, in order to evaluate solutions of Pell equation in an original way.
\end{abstract}

\maketitle

\section{Introduction}

The Pell equation is
$$x^2-dy^2=1,$$
where $d$ is a given positive integer and $x$, $y$ are unknown numbers, whose values we are seeking over the integers. If $d$ is a perfect square, the Pell equation has only the trivial solution $(1,0)$. The interesting case is when $d$ is not a square, because then the Pell equation has infinite integer solutions. For example, knowing the minimal non trivial solution $(x_1,y_1)$ with $x_1$ and $y_1$ positive integers, we can evaluate
$$(x_1+y_1\sqrt{d})^n=x_n+y_n\sqrt{d}, \quad \forall n\geq0$$
where $(x_n,y_n)$ is also a solution of the Pell equation (cf. \cite{Jac}). The Pell equation has ancient origins (for further references, see \cite{Dick}, \cite{Weil}) dating back to Archimedes Cattle Problem, but it is not known if Archimedes was able to solve it. Surely the first mathematician who found a method for solving the Pell equation was the Indian  Brahmagupta. If you know two solutions $(x_1,y_1)$ and $(x_2,y_2)$ of the Pell equation, with the Brahmaguptha method you find that
$$(x_1x_2+dy_1y_2,x_1y_2+y_1x_2),\quad (x_1x_2-dy_1y_2, x_1y_2+y_1x_2)$$ 
are also solutions. Nowadays the most common method of resolution involves continued fractions. A continued fraction is a representation of a real number $\alpha$ through a sequence of integers as follows:
$$\alpha=a_0+\cfrac{1}{a_1+\cfrac{1}{a_2+\cfrac{1}{a_3+\cdots}}}\ ,$$
where the integers $a_0,a_1,...$ can be evaluated with the recurrence relations 
$$\begin{cases} a_k=[\alpha_k]\cr \alpha_{k+1}=\cfrac{1}{\alpha_k-a_k} \quad \text{if} \ \alpha_k \ \text{is not an integer}  \end{cases}  \quad k=0,1,2,...$$
for $\alpha_0=\alpha$ (cf. \cite{Olds}). A continued fraction can be expressed in a compact way using the notation $[a_0,a_1,a_2,a_3,...]$. The finite continued fraction
$$[a_0,...,a_n]=\cfrac{p_n}{q_n}\ ,\quad n=0,1,2,...$$
is a rational number and is called the $n$--th \emph{convergent} of $[a_0,a_1,a_2,a_3,...]$. The continued fraction expansion of $\sqrt{d}$ is periodic and it has the form (cf. \cite{Olds})
$$\sqrt{d}=[a_0,\overline{a_1,...,a_{L-1},2a_0}]\ .$$
It is possible to prove that the minimal solution of the Pell equation is
$$(x_1,y_1)=(p_{L-1},q_{L-1}),\quad L \ \hbox{even}$$
$$(x_1,y_1)=(p_{2L-1},q_{2L-1}),\quad L \ \hbox{odd},$$
where
$$\cfrac{p_{L-1}}{q_{L-1}}=[a_0,...,a_{L-1}]$$
$$\cfrac{p_{2L-1}}{q_{2L-1}}=[a_0,...,a_{2L-1}]\ .$$
Furthermore all solutions of the Pell equation are of the form $(p_{nL-1},q_{nL-1})$, when $L$ is even, and $(p_{2nL-1},q_{2nL-1})$ when $L$ is odd, for $n=1,2,...$( cf. \cite{Olds}).\\
The geometrical locus containing  all solutions of  the Pell equation is the Pell hyperbola
$$H_d=\{(x,y)\in\mathbb R^2:x^2-dy^2=1\}\ .$$
The fascinating aspect of working with the Pell hyperbola is a group law that we can give over $H_d$. In fact, for every pair of points $P,Q\in H_d$, we can define their product as the intersection between $H_d$ and a line through $(1,0)$ parallel to the line $PQ$. This geometric construction is essentially the same presented in the book of Veblen (cf. \cite{Veblen}) as a general product between points on a line (\cite{Veblen} p. 144), which can be defined in a similar way over conics (\cite{Veblen}, p.231--232), in order to obtain a group structure. Algebraically (see, e.g., \cite{Jac}) this product between two points $(x_1,y_1)$ and $(x_2, y_2)$ is given by the point
$$(x_1x_2+dy_1y_2, y_1x_2+x_1y_2)\ .$$
This geometric group law over the Pell hyperbola, similar to that one over an elliptic curve, has the advantage of a nice algebraic expression, which yields greatly simplified formulas (cf. \cite{Lem}). In this paper we will see how it is possible to obtain this
product in an original way, starting essentially from a transform over $\mathbb R \cup \{\infty\}$. Then we will use the Rédei rational functions (cf. \cite{Redei}) in order to evaluate the powers of points over $H_d$.\\
The Rédei rational functions arise from the development of $(z+\sqrt{d})^n$, where $z$ is an integer and where $d$ is a nonsquare positive integer. One can write
\begin{equation}\label{pow}(z+\sqrt{d})^n=N_n(d,z)+D_n(d,z)\sqrt{d}\ ,\end{equation}
where
$$ N_n(d,z)=\sum_{k=0}^{[n/2]}\binom{n}{2k}d^kz^{n-2k}, \quad D_n(d,z)=\sum_{k=0}^{[n/2]}\binom{n}{2k+1}d^kz^{n-2k-1}.  $$
The Rédei rational functions $Q_n(d,z)$ are defined by 
\begin{equation}\label{red}Q_n(d,z)=\cfrac{N_n(d,z)}{D_n(d,z)}, \quad \forall n\geq1 \ .\end{equation}
It is well--known their multiplicative property
\begin{equation*}Q_{nm}(d,z)=Q_n(d,Q_m(d,z))\ ,\end{equation*}
for any couple of indexes $n,m$. Thus the Rédei functions are closed with respect to composition and satisfy the commutative property
$$Q_n(d,Q_m(d,z))=Q_m(d,Q_n(d,z))\ .$$ 
The multiplicative property of the Rédei functions has several applications. For example, this property has been exploited  in order to create a public key cryptographic system (cf. \cite{Nob}). The Rédei rational functions reveal their utility in several other fields. Given a finite field $\mathbb F_q$, of order $q$, and $\sqrt{d}\not\in\mathbb F_q$, then $Q_n(d,z)$ is a permutation of $\mathbb F_q$ if and only if $(n,q+1)=1$ (see \cite{Lidl}, p. 44). Another recent application of these functions is in finding a new bound for multiplicative character sums of nonlinear recurring sequences (cf. \cite{Gomez}). Moreover, they can be used in order to generate pseudorandom sequences (cf. \cite{Topu}). In this paper we will see a totally different approach for the Rédei rational functions, studying an original connection between them and continued fractions. In particular we will see how we can apply Rédei rational functions in order to generate solutions of the Pell equation. This research starts from the fact that the sequence of functions $Q_n(d,z)$, by definition, converges to $\sqrt{d}$, i.e., Rédei functions are rational approximations of $\sqrt{d}$, for any parameter $z$. We know that the best rational approximations of an irrational number are provided by the convergents of its continued fraction (cf. \cite{Olds}). So it would be beautiful to find a parameter $z$ in order to obtain some convergents from $Q_n(d,z)$. In this paper we find the parameter $z$ such that $Q_{2n}(d,z)$ correspond to all the convergents of the continued fraction of $\sqrt{d}$, leading to the solutions of the Pell equation. This method uses the Rédei rational functions in a totally different field from the classic ones and, considering the fast evaluation of Rédei rational functions (cf. \cite{More}), it allows to generate solutions of the Pell equation in a rapid way.

\section{A new operation over $\mathbb R^{\infty}$ }

We consider a transform, defined over $\mathbb{R}^{\infty}=\mathbb{R}\cup\{\infty\}$ which induces a natural product over $\mathbb R^{\infty}$. We show how this operation is strictly related to the Rédei rational functions (cf. \cite{Redei}), revealing a new interesting point of view to understand their multiplicative property.
\begin{definition}\label{defrodi}
For any positive real number $d$,  we define the transform $$\rho_d:\mathbb{R}^{\infty}\rightarrow\mathbb{R}^{\infty}$$ 
\begin{equation}\label{rodi} \rho_d(x)=\cfrac{x+1}{x-1}\sqrt{d}\ . \end{equation}
\end{definition}
As immediate consequences of (\ref{rodi}), we have that
\begin{equation}\label{roval}\rho_d(1)=\infty, \quad \rho_d(0)=-\sqrt{d}, \quad \rho_d(\infty)=\sqrt{d}\ . \end{equation}
Moreover, we can easily find the inverse  of $\rho_d$:
\begin{equation}\label{roinv}\rho_d^{-1}(x)=\cfrac{x+\sqrt{d}}{x-\sqrt{d}}\ . \end{equation}
A more important achievement  is the product $\odot_{d}$, induced by $\rho_d$ over $\mathbb{R}^{\infty}$
\begin{equation} \label{prod} x \odot_{d} y=\rho_d(\rho_d^{-1}(x)\rho_d^{-1}(y))=\cfrac{d+xy}{x+y}\ , \end{equation}
which is surely associative and commutative. Furthermore, comparing $\odot_{d}$ with the usual product over $\mathbb R$,  $\sqrt{d}$ plays the role of $\infty$ and $-\sqrt{d}$ the role of 0, since we have the following relations for all $x\in\mathbb{R}^{\infty}$ 
$$ \sqrt{d}\odot_d x=\cfrac{d+\sqrt{d}x}{\sqrt{d}+x}=\sqrt{d} \quad \hbox{and} \quad -\sqrt{d}\odot_d x=-\sqrt{d}\ .$$
These properties, together with (\ref{roval}), suggest that this new product induces a group structure on the set $$P_d:=\mathbb R^\infty-\{\pm\sqrt{d}\}\ .$$ Over $(P_d,\odot_d)$, $\infty$ is the identity with respect to $\odot_d$ and any element $x$ has the unique inverse $-x$ since
$$ x \odot_d \infty=x,  \quad x \odot_d (-x)=\infty, \quad \forall x\in P_d \ . $$
We immediately have the following. 
\begin{theorem}
$(P_d,\odot_d)$ is a commutative group. The transform $\rho_d $ is an isomorphism from the group $(R^*,\cdot)$, of nonzero real numbers under ordinary multiplication, into the group $(P_d,\odot_d)$.
\end{theorem}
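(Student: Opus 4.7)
The plan is to exploit the fact that the product $\odot_d$ is \emph{defined} by transport of structure through $\rho_d$, so that once $\rho_d$ is shown to be a well-defined bijection between the correct sets, all group axioms and the homomorphism property come for free.

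First I would verify that $\rho_d$ restricts to a bijection $\mathbb{R}^*\to P_d$. From the explicit formula (\ref{rodi}) and its inverse (\ref{roinv}), $\rho_d$ is a M\"obius transformation, hence a bijection $\mathbb{R}^\infty\to\mathbb{R}^\infty$. Tracking the special points using (\ref{roval}) and (\ref{roinv}) gives $\rho_d(0)=-\sqrt{d}$, $\rho_d(\infty)=\sqrt{d}$, and $\rho_d^{-1}(\sqrt{d})=\infty$, $\rho_d^{-1}(-\sqrt{d})=0$. Thus $\rho_d$ carries the excluded set $\{0,\infty\}$ of $\mathbb{R}^*\cup\{0,\infty\}=\mathbb{R}^\infty$ precisely onto $\{-\sqrt{d},\sqrt{d}\}$, so it restricts to a bijection from $\mathbb{R}^*=\mathbb{R}^\infty\setminus\{0,\infty\}$ onto $P_d=\mathbb{R}^\infty\setminus\{\pm\sqrt{d}\}$, with inverse given by (\ref{roinv}).

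Next, the formula (\ref{prod}), $x\odot_d y=\rho_d(\rho_d^{-1}(x)\rho_d^{-1}(y))$, shows that $\odot_d$ is obtained by conjugating ordinary multiplication through the bijection $\rho_d$. Therefore $\odot_d$ defines a binary operation on $P_d$ (closure is automatic since $\rho_d^{-1}(x)\rho_d^{-1}(y)\in\mathbb{R}^*$ and $\rho_d$ sends $\mathbb{R}^*$ into $P_d$), and for every $a,b\in\mathbb{R}^*$
\[
\rho_d(ab)=\rho_d\!\bigl(\rho_d^{-1}(\rho_d(a))\,\rho_d^{-1}(\rho_d(b))\bigr)=\rho_d(a)\odot_d\rho_d(b),
\]
which is exactly the homomorphism property. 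Combined with bijectivity, this makes $\rho_d$ a group isomorphism, provided we know $(P_d,\odot_d)$ is a group; but since $\odot_d$ is the pull-back of a group operation via a bijection, associativity, commutativity, identity and inverses on $P_d$ are immediate consequences of the corresponding facts in $(\mathbb{R}^*,\cdot)$. In particular $\rho_d(1)=\infty$ must be the identity and $\rho_d(a^{-1})$ must be the $\odot_d$-inverse of $\rho_d(a)$; a quick check using (\ref{roinv}) confirms that $\rho_d(a^{-1})=-\rho_d(a)$, matching the formula $x\odot_d(-x)=\infty$ already observed.

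There is really no hard step here: the only thing to be careful about is the bookkeeping of domains, i.e.\ making sure that the points $0,\infty$ on one side correspond exactly to $\pm\sqrt{d}$ on the other, so that no element ever leaves $P_d$ under $\odot_d$ and no division by zero occurs in (\ref{prod}). Once that bijection is confirmed, the theorem is a formal consequence of transport of structure.
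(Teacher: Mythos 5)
Your proposal is correct and follows essentially the same route as the paper, which states the theorem as an immediate consequence of the fact that $\odot_d$ is defined by conjugating ordinary multiplication through $\rho_d$ (the paper merely adds the explicit observations that $\infty$ is the identity and $-x$ the inverse, both of which you also verify via $\rho_d(1)=\infty$ and $\rho_d(a^{-1})=-\rho_d(a)$). Your more careful bookkeeping of how $\{0,\infty\}$ corresponds to $\{\pm\sqrt{d}\}$ is exactly the detail the paper leaves implicit.
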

\begin{remark}
If we consider two positive real numbers $d$, $e$, we also have an immediate isomorphism between $(P_d,\odot_d)$ and $(P_e,\odot_e)$:
 $$\phi:P_e\rightarrow P_d$$ $$\phi:x \mapsto x\sqrt{\frac{d}{e}}\ .$$
\end{remark}
We now show a wonderful relation relating the product (\ref{prod}) and the Rédei rational functions (cf. \cite{Redei}). It is well--known that for any pair of indices $m$ and $n$,
\begin{equation}\label{mult}Q_{nm}(d,z)=Q_n(d,Q_m(d,z))\end{equation}
but what can we say about $Q_{n+m}(d,z)$? Using product (\ref{prod}) the answer is simple and furthermore we can use this fact in order to obtain the multiplicative property in a more explicative way than the usual one (see \cite{Lidl}, p. 22--23). 
\begin{proposition} \label{sum-prop}
With the notation introduced above
$$Q_{n+m}(d,z)=Q_n(d,z)\odot_d Q_m(d,z).$$
\end{proposition}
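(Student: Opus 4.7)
The plan is to reduce the claim to the multiplicativity of the norm-like map $a+b\sqrt{d}\mapsto (z+\sqrt{d})^k$ in $\mathbb{R}[\sqrt{d}]$, and then to recognize the resulting formula as the $\odot_d$ product of the two R\'edei values.

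First I would write down the defining identity (\ref{pow}) for both exponents $n$ and $m$:
\begin{equation*}
(z+\sqrt{d})^n=N_n(d,z)+D_n(d,z)\sqrt{d},\qquad (z+\sqrt{d})^m=N_m(d,z)+D_m(d,z)\sqrt{d}.
\end{equation*}
Multiplying these two equalities and comparing with $(z+\sqrt{d})^{n+m}=N_{n+m}(d,z)+D_{n+m}(d,z)\sqrt{d}$ gives, by identification of the rational part and the coefficient of $\sqrt{d}$, the addition formulas
\begin{equation*}
N_{n+m}=N_nN_m+dD_nD_m,\qquad D_{n+m}=N_nD_m+D_nN_m.
\end{equation*}
This is the one computational step in the proof, and it is completely routine since $\{1,\sqrt{d}\}$ is $\mathbb{R}$-linearly independent (as $d$ is assumed positive but this is used only in the convenient case $d$ non-square; the identity is in fact a polynomial identity in the formal symbol $\sqrt{d}$, so no hypothesis is needed).

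Next I would form the quotient defining $Q_{n+m}(d,z)$ and divide numerator and denominator by $D_n(d,z)D_m(d,z)$:
\begin{equation*}
Q_{n+m}(d,z)=\frac{N_nN_m+dD_nD_m}{N_nD_m+D_nN_m}=\frac{\tfrac{N_n}{D_n}\cdot\tfrac{N_m}{D_m}+d}{\tfrac{N_n}{D_n}+\tfrac{N_m}{D_m}}=\frac{d+Q_n(d,z)Q_m(d,z)}{Q_n(d,z)+Q_m(d,z)}.
\end{equation*}
By the definition (\ref{prod}) of $\odot_d$, the right-hand side is precisely $Q_n(d,z)\odot_d Q_m(d,z)$, which finishes the argument.

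The only real obstacle is bookkeeping around exceptional values: if $D_n(d,z)=0$ or $D_m(d,z)=0$ then $Q_n$ or $Q_m$ takes the value $\infty$, and similarly if $Q_n(d,z)+Q_m(d,z)=0$ the $\odot_d$ expression formally involves $\infty$. All of these cases are handled consistently by the conventions recorded in (\ref{roval}) together with the fact that $\rho_d$ is an isomorphism $(\mathbb{R}^{*},\cdot)\to(P_d,\odot_d)$ established in the previous theorem, so one can alternatively bypass the cases by noting that $Q_k(d,z)=\rho_d\!\bigl(\rho_d^{-1}(z)^k\bigr)$ and then using the group isomorphism directly. I would present the polynomial computation as the main argument and mention the group-theoretic interpretation as a conceptual confirmation.
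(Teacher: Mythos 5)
Your proposal is correct and follows essentially the same route as the paper: you establish the addition formulas $N_{n+m}=N_nN_m+dD_nD_m$ and $D_{n+m}=N_nD_m+D_nN_m$ and then divide through by $D_nD_m$ to recognize the $\odot_d$ product, exactly as in the paper's proof. The only cosmetic difference is that you obtain these formulas by multiplying the expansions of $(z+\sqrt{d})^n$ and $(z+\sqrt{d})^m$ directly, whereas the paper multiplies the corresponding $2\times 2$ matrices from the representation (\ref{redei-matrix}); these are equivalent derivations, and your added care about the exceptional values $Q_k=\infty$ is a point the paper leaves implicit.
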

\begin{proof}
For the sake of simplicity, here we omit the variables $z$ and $d$, so $$N_n(d,z)=N_n,\quad D_n(d,z)=D_n,\quad Q_n(d,z)=Q_n .$$ 
Using a matricial approach (cf. \cite{Von}) we have 
\begin{equation} \label{redei-matrix} \begin{pmatrix} z & d \cr 1 & z  \end{pmatrix}^n=\begin{pmatrix} N_n & dD_n \cr D_n & N_n  \end{pmatrix}, \end{equation}
so
$$ \begin{pmatrix} N_{n+m} & dD_{n+m} \cr D_{n+m} & N_{n+m}  \end{pmatrix}=\begin{pmatrix} N_n & dD_n \cr D_n & N_n  \end{pmatrix}\begin{pmatrix} N_m & dD_m \cr D_m & N_m  \end{pmatrix}. $$
Comparing the resulting matrix on the right with the one on the left, we finally obtain the relations 
\begin{equation} \label{redei-index-sum} \begin{cases} N_{n+m}=N_nN_m+dD_nD_m \cr D_{n+m}=D_nN_m+N_nD_m \ .\end{cases} \end{equation}
Now the proof is straightforward using (\ref{redei-index-sum}) and definition of Rédei rational functions (\ref{red}):
$$Q_n\odot_d Q_m=\cfrac{d+Q_nQ_m}{Q_n+Q_m}=\cfrac{d+\frac{N_n}{D_n}\cdot\frac{N_m}{D_m}}{\frac{N_n}{D_n}+\frac{N_m}{D_m}}=\cfrac{N_nN_m+dD_nD_m}{N_nD_m+N_mD_n}=Q_{n+m}\ . $$
\end{proof}
\begin{remark}
It is interesting to consider the additive property of the Rédei rational functions because it clarifies the meaning of the multiplicative property and makes this relation simpler to use. Furthermore, we can relate these functions to the integers with the operations of sum and product, extending their definition to negative indexes $n$ through this matricial approach. Indeed the set of the Rédei rational functions $\{Q_n(d,z), \forall n\in \mathbb Z\}$ with the operations $\odot_d$ and $\circ$ (the composition of functions) is a ring isomorphic to $(\mathbb Z,+,\cdot)$.
\end{remark}
The previous proposition is a powerful tool, which enables us to evaluate the powers of elements with respect to the product (\ref{prod}).
\begin{corollary} \label{zn}
Let $z^{n_{\odot_d}}=\underbrace{z\odot_d \cdots \odot_d z}_n$ be the $n$--th power of $z$ with respect to the product (\ref{prod}). Then
$$z^{n_{\odot_d}}=Q_n(d,z)\ .$$
\end{corollary}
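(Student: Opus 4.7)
The plan is a one-line induction on $n$ built on Proposition \ref{sum-prop}. First I would verify the base case $n=1$: from the definitions $N_1(d,z)=z$ and $D_1(d,z)=1$, so $Q_1(d,z)=z$, which matches the trivial empty product $z^{1_{\odot_d}}=z$.

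For the inductive step, assuming $z^{n_{\odot_d}}=Q_n(d,z)$, I would write
\[
z^{(n+1)_{\odot_d}} \;=\; z^{n_{\odot_d}} \odot_d z \;=\; Q_n(d,z)\odot_d Q_1(d,z)
\]
and then invoke Proposition \ref{sum-prop} directly to conclude that the right-hand side equals $Q_{n+1}(d,z)$, closing the induction.

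There is really no obstacle here: the corollary is essentially a restatement of the additive property on the diagonal $n=m=\cdots$. The only subtlety worth a line of comment is that the induction takes place inside the group $(P_d,\odot_d)$, so one should note that the sequence $Q_n(d,z)$ stays in $P_d$ (i.e.\ it never equals $\pm\sqrt{d}$) whenever $z$ does, which follows from the fact that $\rho_d$ is a group isomorphism and $Q_n(d,z)$ corresponds via $\rho_d^{-1}$ to the $n$-th power of a nonzero real number. Alternatively, one can simply read the identity at the level of the $2\times 2$ matrix in (\ref{redei-matrix}): the $n$-th matrix power encodes $Q_n$, and $\odot_d$ corresponds to matrix multiplication, so the claim reduces to $M^n = M\cdot M^{n-1}$.
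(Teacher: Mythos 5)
Your proof is correct and follows essentially the same route as the paper: verify $Q_1(d,z)=z$ and then apply Proposition \ref{sum-prop} repeatedly (the paper writes this as $Q_{1+\cdots+1}=Q_n$ rather than as a formal induction, but it is the same argument). Your extra remark that the $Q_n(d,z)$ stay in $P_d$, and the alternative reading via the matrix identity (\ref{redei-matrix}), are sound but not needed beyond what the paper already does.
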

\begin{proof}
From (\ref{pow}) and (\ref{red}), when $n=1$
$$z=Q_1(d,z)\ .$$
Therefore 
$$z^{n_{\odot_d}}=z\odot_d \cdots \odot_d z=Q_1(d,z)\odot_d\cdots \odot_d Q_1(d,z)= Q_{1+\cdots+1}(d,z)=Q_n(d,z).$$
\end{proof} 
This Corollary shines a light on the multiplicative property (\ref{mult}), which holds because $Q_n(d,z)$ is essentially a power of an element with respect to the product (\ref{prod}):
$$Q_n(d,Q_m(d,z))=(Q_m(d,z))^{n_{\odot_d}}=(z^{m_{\odot_d}})^{n_{\odot_d}}=z^{nm_{\odot_d}}=Q_{nm}(d,z).$$
This proof for the multiplicative property allows us to consider the Rédei rational functions as a power of an element and in this way they are easier to use, as we will see later.

\section{A commutative group over Pell hyperbola}
The group $(P_d,\odot_d)$, related to a nonsquare positive integer $d$, reveals some important connections to the Pell hyperbola
$$ H_d=\{(x,y)\in \mathbb R^2 / x^2-dy^2=1\},$$
with asymptotes $$y=\pm\cfrac{x}{\sqrt{d}}\ .$$ 
The Pell hyperbola $H_d$ can be parametrically represented using the line
\begin{equation} \label{line} y=\cfrac{1}{m}(x+1)\ . \end{equation}
Here we use, as parameter $m$, the cotangent of the angle formed with the $x$--axis, instead of the usual tangent. We choose the cotangent as parameter because it is the only choice which allows us to have a correspondence between $P_d$ and $H_d$. Indeed in this case $\infty$ (the identity of $P_d$) will correspond to $(1,0)$ (the identity of $H_d$). Moreover, the two points at the infinity are obtained by the parameters $\pm\sqrt{d}$.\\ 
Now, for all $m \not=\pm\sqrt{d}$, the line (\ref{line}) intersects $H_d$ in only two points: $(-1,0)$ and another one. Thus it seems natural to visualize $P_d$ as the parametric line. Moreover, the bijection 
$$ \epsilon_d:P_d\rightarrow H_d $$
$$\epsilon_d:m \mapsto \left(\cfrac{m^2+d}{m^2-d}\ , \cfrac{2m}{m^2-d}\right),$$
yields a solution to the system
\begin{equation} \label{tau} \begin{cases} x^2-dy^2=1 \cr y=\cfrac{1}{m}(x+1)\ . \end{cases}\end{equation}
In fact, we find two solutions $(-1,0),\ \left(\frac{m^2+d}{m^2-d}\ ,\frac{2m}{m^2-d}\right).$ The bijection $\epsilon_d$ maps 0 into $(-1,0)$, $\infty$ into $(1,0)$, and, using (\ref{line}), the inverse of $\epsilon_d$ can be seen to be
$$ \tau:H_d\rightarrow P_d $$
$$ \tau:(x,y)\mapsto \cfrac{1+x}{y}\ . $$
We observe that the map $\tau$ has no explicit dependence on $d$, but this dependence is implicit when we consider a point $(x,y)\in H_d$. Since we have a bijection from $P_d$ to $H_d$, we can transform the product $\odot_d$ over $P_d$ into a product $\odot_H$ over $H_d$ as 
$$ (s,t)\odot_H (u,v)=\epsilon_d(\tau(s,t)\odot_d\tau(u,v)), \quad \forall (s,t),(u,v)\in H_d \ . $$
Using the definitions of $\epsilon$ and $\tau$:
$$\tau(s,t)\odot_d\tau(u,v)=\left(\cfrac{1+s}{t}\right)\odot_d\left(\cfrac{1+u}{v}\right)=\cfrac{1+s+u+su+dtv}{t+tu+v+sv}\ ,$$
and, using the relations $s^2-dt^2=1$ and $u^2-dv^2=1$, finally we have
\begin{equation} \label{prodH} (s,t)\odot_H (u,v)=(su+dtv,tu+sv). \end{equation}
The operation constructed on $H_d$ has all the good properties of $\odot_d$. Therefore we have the following
\begin{theorem}
$(H_d,\odot_H)$ is isomorphic to $(P_d,\odot_d)$ and $(H_d,\odot_H)$ is a commutative group.
\end{theorem}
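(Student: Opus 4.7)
The plan is to prove this by transport of structure. The product $\odot_H$ was \emph{defined} by the formula $(s,t)\odot_H(u,v)=\epsilon_d(\tau(s,t)\odot_d\tau(u,v))$, so tautologically $\epsilon_d(m\odot_d n)=\epsilon_d(m)\odot_H\epsilon_d(n)$ for all $m,n\in P_d$. Combined with the already-established fact that $\epsilon_d$ and $\tau$ are mutually inverse bijections between $P_d$ and $H_d$, this makes $\epsilon_d$ a group isomorphism onto $(H_d,\odot_H)$, provided the latter is indeed a group. Thus the first assertion is essentially built into the construction, and the second assertion (that $(H_d,\odot_H)$ is a commutative group) follows by pulling every group axiom back through $\tau$ from the group $(P_d,\odot_d)$, whose structure was established in the preceding theorem.

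Concretely, I would lay out the proof in three short steps. First, note that since $(P_d,\odot_d)$ is commutative, so is $\odot_H$, because $\tau(s,t)\odot_d\tau(u,v)=\tau(u,v)\odot_d\tau(s,t)$. Second, transport the identity: the identity of $P_d$ is $\infty$, and $\epsilon_d(\infty)=(1,0)\in H_d$, so $(1,0)$ acts as the identity for $\odot_H$. Third, transport inverses: the $\odot_d$-inverse of $m\in P_d$ is $-m$, so the $\odot_H$-inverse of $(s,t)=\epsilon_d(m)$ is $\epsilon_d(-m)$; a short calculation with the formula for $\epsilon_d$ shows $\epsilon_d(-m)=(s,-t)$, which is the expected conjugate point on $H_d$. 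Associativity passes through identically by transport.

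The one computation that genuinely needs to be checked (rather than merely invoked) is closure: one must verify that the explicit expression $(su+dtv,\,tu+sv)$ actually lies on $H_d$ whenever $(s,t),(u,v)\in H_d$. Expanding gives
\begin{equation*}
(su+dtv)^2-d(tu+sv)^2=(s^2-dt^2)(u^2-dv^2)=1\cdot 1=1,
\end{equation*}
which is the Brahmagupta identity. This both confirms closure directly and reassures us that the formula (\ref{prodH}) derived from $\tau$ and $\epsilon_d$ agrees with what the transport argument predicts.

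I do not expect any real obstacle here, since the theorem is set up to be proved by structure transport, and the only non-formal ingredient is the Brahmagupta identity above. If anything requires care, it is simply making sure that no element of $H_d$ maps under $\tau$ to one of the forbidden values $\pm\sqrt{d}$ excluded from $P_d$; but this is immediate, because $\tau(x,y)=(1+x)/y=\pm\sqrt{d}$ together with $x^2-dy^2=1$ forces $y=0$, while points with $y=0$ on $H_d$ are only $(\pm1,0)$, and $\tau(1,0)=\infty$, $\tau(-1,0)=0$, both of which are legitimate elements of $P_d$.
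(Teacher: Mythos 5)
Your proposal is correct and follows exactly the transport-of-structure argument the paper relies on (the paper in fact gives no explicit proof, only the remark that $\odot_H$ inherits all the good properties of $\odot_d$ through the bijections $\epsilon_d$ and $\tau$). Your added verifications --- the Brahmagupta identity for closure and the check that $\tau$ never lands on $\pm\sqrt{d}$ --- are sound and merely make explicit what the paper leaves implicit, the only cosmetic wrinkle being that $\tau(-1,0)$ is formally $0/0$ and must be read as $0$ by the convention $\epsilon_d(0)=(-1,0)$.
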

As we have seen in the introduction, the product (\ref{prodH}) is a classical product used to construct a group over a conic, but here we have seen how it is possible to obtain this product in an original way, starting essentially from a transform over $\mathbb R^{\infty}$.
\begin{remark}
$H_d$ is isomorphic to the group of unit norm elements in $\mathbb Q(\sqrt{d})$. Remembering that
$$\mathbb Q(\sqrt{d})=\{s+t\sqrt{d}:s,t\in\mathbb Q\},$$
the natural product between two elements of $\mathbb Q(\sqrt{d})$ is
$$(s+t\sqrt{d})(u+v\sqrt{d})=(su+dtv)+(tu+sv)\sqrt{d}.$$
\end{remark}
\begin{remark}
We can easily observe that the products $\odot_d$ and $\odot_H$, and the transformations $\epsilon_d, \tau$, all involve only rational operations. If we consider $\mathbb P_d=\mathbb Q \cup \{\infty\}=\mathbb Q^\infty$ and $\mathbb H_d=\{(x,y)\in H_d/x,y\in \mathbb Q\}$, then we retrieve again  $\epsilon_d, \tau$, as isomorphisms between $\mathbb P_d$ and $\mathbb H_d$. Furthermore $\mathbb P_d$ does not depend on $d$ and $(\mathbb P_d,\odot_d)\cong(\mathbb Q^*,\cdot)$. 
\end{remark}

\section{Generating solutions of the Pell equation via Rédei rational functions}
We are ready to put together all the results obtained in the previous sections. The main purpose of this work is to reveal a beautiful connection among Redéi rational functions, the product $\odot_H$ and solutions of the Pell equation.\\
The matrix representation of $N_n(d,z), D_n(d,z)$, introduced in Proposition \ref{sum-prop}, allows us to consider
$$(N_n(d,z))_{n=0}^{+\infty}=\mathcal W(1,z,2z,z^2-d)$$
$$(D_n(d,z))_{n=0}^{+\infty}=\mathcal W(0,1,2z,z^2-d)\ .$$
Here $(a_n)_{n=0}^{+\infty}=\mathcal W(a,b,h,k)$ indicates the linear recurrent sequence of order 2, with initial conditions $a,b$ and characteristic polynomial $t^2-ht+k$, i.e.,
$$\begin{cases}  a_0=a \cr a_1=b \cr a_n=ha_{n-1}-ka_{n-2} \quad \forall n\geq 2 \ . \end{cases}$$
Now for any point $(x,y)\in H_d$ we set $$(x_n,y_n)=(x,y)^{n_{\odot_H}},$$ where $$(x_0,y_0)=(1,0)\quad \text{and}\quad (x_1,y_1)=(x,y).$$ We know that $$(x+y\sqrt{d})^n=x_n+y_n\sqrt{d}\ ,$$ and so $(x_n), (y_n)$, as sequences, recur with polynomial $t^2-2xt+1$. We can easily observe that $(x,y)^{n_{\odot_H}}=(F_n(x),yG_n(x))$ and
$$(F_n(x))_{n=0}^{+\infty}=\mathcal W(1,x,2x,1)$$
$$(G_n(x))_{n=0}^{+\infty}=\mathcal W(0,1,2x,1)\ .$$
Now, comparing the recurrence of $F_n(x)$ and $N_n(d,z)$, we can see that they coincide when $$z=x \quad \hbox{and}\quad z^2-d=1\ ,$$
i.e., $d=z^2-1=x^2-1.$
We immediately have the following
\begin{proposition} With notation introduced above
$$F_n(x)=N_n(x^2-1,x)\quad \hbox{and}\quad G_n(x)=D_n(x^2-1,x).  $$
\end{proposition}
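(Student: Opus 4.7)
The plan is to prove both equalities by showing that the two sides satisfy the same second–order linear recurrence with the same initial data, and then invoking uniqueness of such a sequence.

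First I would recall the two recurrences already on the table. By definition, $(F_n(x))=\mathcal W(1,x,2x,1)$ and $(G_n(x))=\mathcal W(0,1,2x,1)$, i.e. both obey $a_n=2x\,a_{n-1}-a_{n-2}$ but with initial conditions $(1,x)$ and $(0,1)$ respectively. On the other side, the matrix identity (\ref{redei-matrix}) exhibits $N_n(d,z)$ and $D_n(d,z)$ as the entries of the $n$-th power of $M=\begin{pmatrix} z & d \\ 1 & z\end{pmatrix}$, whose characteristic polynomial (by Cayley--Hamilton) is $t^2-2zt+(z^2-d)$. Hence both sequences $(N_n(d,z))$ and $(D_n(d,z))$ satisfy $a_n=2z\,a_{n-1}-(z^2-d)\,a_{n-2}$, i.e. $\mathcal W(1,z,2z,z^2-d)$ and $\mathcal W(0,1,2z,z^2-d)$, where the initial values come from reading off $M^0=I$ and $M^1=M$.

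Next I would specialize $d=x^2-1$ and $z=x$. Under this substitution $2z=2x$ and $z^2-d=x^2-(x^2-1)=1$, so the characteristic data for $N_n(x^2-1,x)$ becomes exactly $\mathcal W(1,x,2x,1)$ and for $D_n(x^2-1,x)$ exactly $\mathcal W(0,1,2x,1)$. These match the recurrences and the initial conditions defining $F_n(x)$ and $G_n(x)$, so by uniqueness of the solution of a linear recurrence with prescribed initial values the two identities follow.

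There is essentially no obstacle: the only thing to double-check is the bookkeeping of initial conditions (in particular that $N_0=1,\ D_0=0,\ N_1=z,\ D_1=1$), which is immediate from $M^0$ and $M^1$. A one-line alternative, which I would note for completeness, is to observe that on $H_{x^2-1}$ the point $(x,1)$ lies (since $x^2-(x^2-1)\cdot 1^2=1$) and apply Corollary \ref{zn} via the isomorphism $\epsilon_{x^2-1}$: the power $(x,1)^{n_{\odot_H}}=(F_n(x),G_n(x))$ corresponds under $\tau$ to $Q_n(x^2-1,\tau(x,1))$, giving the same identification of the coordinates as $N_n/D_n$ evaluated at $(d,z)=(x^2-1,x)$.
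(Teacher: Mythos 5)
Your proof is correct and follows the same route as the paper: the paper also identifies $(N_n(d,z))=\mathcal W(1,z,2z,z^2-d)$ and $(D_n(d,z))=\mathcal W(0,1,2z,z^2-d)$ from the matrix representation, compares them with $(F_n(x))=\mathcal W(1,x,2x,1)$ and $(G_n(x))=\mathcal W(0,1,2x,1)$, and concludes by setting $z=x$ and $z^2-d=1$. Your write-up merely makes explicit (via Cayley--Hamilton and the check of initial values) what the paper treats as immediate.
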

Let us recall Dickson polynomials (cf. \cite{Dick1}):
$$g_n(a,x)=\sum_{i=0}^{[n/2]}\cfrac{n}{n-i}\binom{n-i}{i}(-a)^ix^{n-2i},$$
introduced by L. E. Dickson over finite fields. He studied when they give permutation of the elements of the finite fields. Furthermore, Dickson polynomials are related to the classical Chebyshev polynomials and they are used in several areas, like cryptography, pseudoprimality testing, in order to construct irreducible polynomials over finite fields and in many other applications of the number theory. The Rédei rational functions are related to the Dickson polynomials:
$$2N_n(d,z)=g_n(2z,z^2-d).$$
When $a=1$, the sequence  $(g_n(1,x)=g_n(x))_{n=0}^{+\infty}$ recurs with characteristic polynomial $t^2-xt+1$ and we can observe that
$$F_n(x)=\cfrac{1}{2}g_n(2x)=N_n(x^2-1,x).$$
Since $(x,y)^{nm_{\odot_H}}=((x,y)^{n_{\odot_H}})^{m_{\odot_H}}$, $F_n(F_m(x))=F_{nm}(x)$ and we have retrieved the multiplicative property of $g_n(x)$. Furthermore we have the explicit formula for $F_n(x)$:
$$F_n(x)=\cfrac{1}{2}\sum_{i=0}^{[n/2]}\cfrac{n}{n-i}\binom{n-i}{i}(-1)^ix^{n-2i}\ .$$
Corollary \ref{zn} allows us to point out that 
$$Q_n(d,\cdot):(\mathbb Q^\infty,\odot_d) \rightarrow (\mathbb Q^\infty,\odot_d)$$
is a morphism for all $n$ and maps $z$ into $z^{n_{\odot_d}}$. In fact,
$$Q_n(d,a\odot_d b)=(a\odot_d b)^{n_{\odot_d}}=a^{n_{\odot_d}}\odot_d b^{n_{\odot_d}}=Q_n(d,a)\odot_d Q_n(d,b).$$
We have
$$Q_n\left(d,\cfrac{d+ab}{a+b}\right)=\cfrac{d+Q_n(d,a)Q_n(d,b)}{Q_n(d,a)+Q_n(d,b)}\ ,$$
and when $a=b=z$,
$$Q_n\left(d,\cfrac{d+z^2}{2z}\right)=\cfrac{d+Q_n(d,z)^2}{2Q_n(d,z)}=Q_n(d,z)\odot_d Q_n(d,z)=Q_{2n}(d,z).$$
\noindent Given $(x,y)\in H_d$, we obtain
$$(\tau(x,y))^{n_{\odot_d}}=\left(\cfrac{1+x}{y}\right)^{n_{\odot_d}}=Q_n\left(d,\cfrac{1+x}{y}\right).$$
Since $(\tau(x,y))^{n_{\odot_d}}=\tau((x,y)^{n_{\odot_H}})\ ,$ 
$$\cfrac{1+F_n(x)}{yG_n(x)}=\tau((F_n(x),yG_n(x)))=Q_n\left(d,\cfrac{1+x}{y}\right),$$
and thus
\begin{equation} \left( \cfrac{1+x}{y} \right)^{n_{\odot_d}}=\cfrac{1+N_n(x^2-1,x)}{yD_n(x^2-1,x)}\ .  \end{equation}
\begin{proposition}\label{prH}
For any $(x,y)\in H_d$
$$\left( \cfrac{1+x}{y} \right)^{2n_{\odot_d}}=\cfrac{F_n(x)}{yG_n(x)}=\cfrac{N_n(x^2-1,x)}{yD_n(x^2-1,x)}\ .$$
\end{proposition}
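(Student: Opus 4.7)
The plan is to invoke the displayed identity $\left(\cfrac{1+x}{y}\right)^{n_{\odot_d}} = \cfrac{1+N_n(x^2-1,x)}{yD_n(x^2-1,x)}$ established immediately before the statement, specialized with $2n$ in place of $n$. Writing $F_n = F_n(x)$ and $G_n = G_n(x)$ for brevity, and using the preceding Proposition to identify $F_n$ with $N_n(x^2-1,x)$ and $G_n$ with $D_n(x^2-1,x)$, this already gives
$$\left(\cfrac{1+x}{y}\right)^{2n_{\odot_d}} = \cfrac{1+F_{2n}}{yG_{2n}}.$$
Hence the problem reduces to the single algebraic identity $\frac{1+F_{2n}}{yG_{2n}} = \frac{F_n}{yG_n}$; the second equality in the statement is then the identification just mentioned.

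To evaluate $F_{2n}$ and $G_{2n}$, I would exploit the $\odot_H$-doubling of $(x,y)^{n_{\odot_H}} = (F_n, yG_n)$. By the explicit formula (\ref{prodH}),
$$(F_n,yG_n)\odot_H (F_n,yG_n) = \bigl(F_n^2 + dy^2G_n^2,\ 2F_n\, yG_n\bigr),$$
so $F_{2n} = F_n^2 + dy^2 G_n^2$ and $G_{2n} = 2F_n G_n$. Because $\odot_H$ is a group law on $H_d$ (previous Theorem), the point $(F_n, yG_n)$ still lies on $H_d$ and therefore satisfies the Pell relation $F_n^2 - dy^2 G_n^2 = 1$. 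In particular $dy^2G_n^2 = F_n^2 - 1$, which collapses the doubling formula to $F_{2n} = 2F_n^2 - 1$, i.e.\ $1 + F_{2n} = 2F_n^2$.

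Substituting,
$$\cfrac{1+F_{2n}}{yG_{2n}} = \cfrac{2F_n^2}{2F_n\, yG_n} = \cfrac{F_n}{yG_n},$$
which closes the argument. No step is genuinely hard: the only subtlety is noticing that the Pell relation for the iterated point $(F_n, yG_n)$ — automatic because $\odot_H$ preserves $H_d$ — is precisely what is needed to upgrade the $n$-th-power formula into the cleaner $2n$-th-power formula stated in the proposition. (Equivalently, one could apply the $\odot_d$-squaring identity $a\odot_d a = \frac{d+a^2}{2a}$ directly to $a = \frac{1+F_n}{yG_n}$ and simplify using the same Pell relation; the computation is the same up to bookkeeping.)
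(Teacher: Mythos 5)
Your proof is correct and follows essentially the same route as the paper: both reduce the claim to the identity $\frac{1+F_{2n}}{yG_{2n}}=\frac{F_n}{yG_n}$ and verify it via the doubling formulas $F_{2n}=F_n^2+dy^2G_n^2$, $G_{2n}=2F_nG_n$ together with the norm-one relation $F_n^2-dy^2G_n^2=1$. The only cosmetic difference is that you obtain these auxiliary identities from the $\odot_H$ group law on $H_d$, while the paper reads them off the matrix relations (\ref{redei-matrix}) and (\ref{redei-index-sum}) for $N_n$ and $D_n$.
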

\begin{proof}
Here, we write $N_n, D_n$, instead of $N_n(x^2-1,x), D_n(x^2-1,x)$. We want to prove the equality
$$\cfrac{1+F_{2n}(x)}{yG_{2n}(x)}=\cfrac{F_n(x)}{yG_n(x)}\ ,$$
and so we consider
$$\cfrac{1+N_{2n}}{yD_{2n}}-\cfrac{N_n}{yD_n}=\cfrac{D_n+D_nN_{2n}-N_nD_{2n}}{yD_{2n}D_n}\ .$$
As a consequence of (\ref{redei-index-sum}) we have
$$N_{2n}(d,z)=N_n^2(d,z)+dD_n^2(d,z)$$
$$ D_{2n}(d,z)=2D_n(d,z)N_n(d,z)\ , $$
and
$$ \cfrac{1+N_{2n}}{yD_{2n}}-\cfrac{N_n}{yD_n}=\cfrac{D_n(1-N_n^2+(x^2-1)D_n^2)}{yD_{2n}D_n}=0. $$
In fact, using (\ref{redei-matrix}) 
$$N_n^2(d,z)-D_n^2(d,z)=(z^2-d)^n,$$
and in our case $$N_n^2(x^2-1,x)-(x^2-1)D_n^2(x^2-1,x)=1.$$
\end{proof}
\begin{corollary}
With the notation at the beginning of Section 3, for any $(x_1,y_1)\in H_d$, i.e., such that $x_1^2-dy_1^2=1$, we have
$$Q_{2n}\left( d,\cfrac{x_1+1}{y_1} \right)=\cfrac{x_n}{y_n}\ .$$
\end{corollary}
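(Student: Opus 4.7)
The plan is to chain together the two preceding results: Proposition \ref{prH} and Corollary \ref{zn}. The corollary is essentially a restatement of Proposition \ref{prH} once the pieces are identified correctly, so almost no new work is needed.

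First I would unfold the notation on the right-hand side. By definition $(x_n,y_n)=(x_1,y_1)^{n_{\odot_H}}$, and by the discussion preceding Proposition \ref{prH} we have $(x,y)^{n_{\odot_H}}=(F_n(x),yG_n(x))$ for any $(x,y)\in H_d$. Taking $(x,y)=(x_1,y_1)$ this gives $x_n=F_n(x_1)$ and $y_n=y_1G_n(x_1)$, so
$$\frac{x_n}{y_n}=\frac{F_n(x_1)}{y_1G_n(x_1)}.$$

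Next I would apply Proposition \ref{prH} with the same $(x,y)=(x_1,y_1)\in H_d$, which gives
$$\left(\frac{1+x_1}{y_1}\right)^{2n_{\odot_d}}=\frac{F_n(x_1)}{y_1G_n(x_1)}.$$
Finally, Corollary \ref{zn} translates the $\odot_d$-power into a R\'edei rational function: for $z=(x_1+1)/y_1$,
$$z^{2n_{\odot_d}}=Q_{2n}(d,z)=Q_{2n}\!\left(d,\frac{x_1+1}{y_1}\right).$$
Combining the three displays yields the claimed identity.

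Because the work has already been done in the preceding proposition and corollary, there is really no obstacle here — the only thing to be careful about is the indexing convention $(x_1,y_1)^{n_{\odot_H}}=(F_n(x_1),y_1G_n(x_1))$, making sure that the ``$x$'' appearing in Proposition \ref{prH} is the first coordinate of the base point $(x_1,y_1)$ rather than of $(x_n,y_n)$. Once that identification is made, the corollary is just a two-line deduction.
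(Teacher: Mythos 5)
Your proof is correct and follows exactly the route the paper intends: the paper states this corollary without proof as an immediate consequence of Proposition \ref{prH}, Corollary \ref{zn}, and the identity $(x,y)^{n_{\odot_H}}=(F_n(x),yG_n(x))$, which is precisely the three-step chain you assembled. Your care about the indexing convention (that the $x$ in Proposition \ref{prH} is the coordinate of the base point $(x_1,y_1)$) is the right detail to flag.
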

We want to apply all these tools, and in particular the Rédei rational functions, to the solutions of the Pell equation $x^2-dy^2=1$. In what follows, we will use Rédei rational functions to find convergents of continued fractions which provide solutions of the Pell equation.\\
Let $(x_1,y_1)$ be the minimal integer solution of the Pell equation and let $\cfrac{p_n}{q_n}\ ,$ for $n=0,1,2,...$, be the convergents of the continued fraction of $\sqrt{d}$ and $L$ the length of the period. As we have seen in the introduction, it is well--known that $(x_1,y_1)=(p_{L-1},q_{L-1})$ when $L$ is even and $(x_1,y_1)=(p_{2L-1},q_{2L-1})$ when $L$ is odd (see  \cite{Olds}). Furthermore, all the solutions of the Pell equation are $(p_{nL-1},q_{nL-1})$, when $L$ is even, and $(p_{2nL-1},q_{2nL-1})$, when $L$ is odd, for $n=1,2,...$\ . Then we have $$(x_1,y_1)^{n_{\odot_H}}=(x_n,y_n)=(p_{nL-1},q_{nL-1})$$ or $$(x_1,y_1)^{n_{\odot_H}}=(x_n,y_n)=(p_{2nL-1},q_{2nL-1})\ ,$$
when $L$ is even or odd respectively,  and by the last Corollary:
$$Q_{2n}\left(d,\cfrac{x_1+1}{y_1}\right)=\cfrac{p_{nL-1}}{q_{nL-1}}, \quad L \ \hbox{even} $$
$$Q_{2n}\left(d,\cfrac{x_1+1}{y_1}\right)=\cfrac{p_{2nL-1}}{q_{2nL-1}}, \quad L\ \hbox{odd} \ .$$ 
In other words, using Rédei rational functions, we can evaluate all the convergents of the continued fraction of $\sqrt{d}$ leading to the solutions of the Pell equation.

\medskip

\noindent AMS Classification Numbers: 11A55, 11B39, 11D09 

\end{document}